\documentclass[11pt]{article}

\usepackage{geometry}
\usepackage{fullpage}

\geometry{scale=0.77, nohead}  
\usepackage[english]{babel}
\usepackage[utf8]{inputenc}
\usepackage[T1]{fontenc}
\usepackage{dsfont}

\usepackage{psfrag}

\usepackage{calrsfs}
\usepackage{mathrsfs}
\usepackage{pdfsync}
\usepackage{amsmath, amsthm, amssymb, amsfonts}
\usepackage[shortlabels]{enumitem}
%\usepackage{enumerate}
%\usepackage[longnamesfirst]{natbib}

%\setcitestyle{round}
\usepackage{url}
\usepackage{float}

\usepackage[usenames]{color}

\usepackage{tikz}

\usetikzlibrary{arrows,decorations.pathmorphing,backgrounds,positioning,fit,automata}

\usetikzlibrary{arrows}
\usetikzlibrary{petri}
\usetikzlibrary{topaths}

\usepackage{indentfirst,calc,euscript}
\usepackage{setspace}
\usepackage[reals]{layout}
\usepackage{xr}
\usepackage{amscd}

\usepackage{sgame}
\usepackage{subfigure}

\usepackage[colorlinks=true,breaklinks=true,bookmarks=true,urlcolor=blue,
     citecolor=blue,linkcolor=blue,bookmarksopen=false,draft=false]{hyperref}

\newcommand{\ignore}[1]{}

\newtheorem{theorem}{Theorem}

\newtheorem{proposition}[theorem]{Proposition}

\theoremstyle{definition}

\newtheorem{remark}[theorem]{Remark}

\newtheorem{case}{Case}

\numberwithin{equation}{section}

\numberwithin{theorem}{section}

\newcommand{\m}{\mathbb}

\author{Bruno Ziliotto\thanks{CNRS, Paris Dauphine University, PSL, Place du Mar\'echal de Lattre de Tassigny, 75016 Paris,
France. \newline E-mail: ziliotto@math.cnrs.fr}}
\title{Convergence of the solutions of the discounted Hamilton-Jacobi equation: a counterexample}
\begin{document}
\maketitle
\bibliographystyle{plain}
\begin{abstract}
This paper provides a counterexample about the asymptotic behavior of the solutions of a discounted Hamilton-Jacobi equation, as the discount factor vanishes. The Hamiltonian of the equation is a 1-dimensional continuous and coercive Hamiltonian. 
\end{abstract}
\section{Introduction and main result}
Let $n \geq 1$. Denote by $\m{T}^n=\m{R}^n/\m{Z}^n$ the $n$-dimensional torus. For $c \in \m{R}$, consider the Hamilton-Jacobi equation
\begin{equation*}
H(x,Du(x))=c \quad (E_0)
\end{equation*}
where the Hamiltonian $H: \m{T}^n \times \m{R}^n \rightarrow \m{R}$ is jointly continuous and coercive in the momentum. In order to build solutions of the above equation, Lions, Papanicolaou and Varadhan \cite{LPV86} have introduced a technique called 
\textit{ergodic approximation}. For $\lambda \in (0,1]$, consider the discounted Hamilton-Jacobi equation
\begin{equation} \label{disceq}
\lambda v_\lambda(x)+H(x,Dv_\lambda(x))=0   \quad (E_\lambda)
\end{equation}
%where the Hamiltonian $H: \m{T}^n \times \m{R}^n \rightarrow \m{R}$ is continuous and coercive in the momentum. 
By a standard argument, this equation has a unique viscosity solution $v_{\lambda}:\m{T}^n \rightarrow \m{R}$. Moreover, $(-\lambda v_{\lambda})$ converges uniformly as $\lambda$ vanishes to a constant $c(H)$ called the \textit{critical value}. Set $u_{\lambda}:=v_{\lambda}+c(H)/\lambda$. 
%Lions, Papanicolaou and Varadhan \cite{LPV86} have proved that there exists a unique constant $c(H)$, called the \textit{critical value}, such that the following equation 
%\begin{equation*}
%H(x,Du(x))=c(H) \quad (E_0)
%\end{equation*}
%has a solution. For $c=c(H)$, consider the family $(u_{\lambda})$ of $(E_{\lambda})$. 
The family $(u_\lambda)$ is equi-Lipschitz, and converges uniformly along subsequences towards a solution of $(E_0)$, for $c=c(H)$. Note that $(E_0)$ may have several solutions. Recently, under the assumption that $H$ is convex in the momentum, Davini, Fathi, Iturriaga and Zavidovique \cite{DFIZ16} have proved that $(u_\lambda)$ converges uniformly (towards a solution of ($E_0$)). In addition, they proved that the solution can be characterized using Mather measures and Peierls barriers. 
\\
Without the convexity assumption, the question of whether $(u_\lambda)$ converges or not remained open. This paper solves negatively this question and provides a 1-dimensional continuous and coercive Hamiltonian for which $(u_\lambda)$ does not converge\footnote{Note that for time-dependent Hamilton-Jacobi equations, several counterexamples about the asymptotic behavior of solutions have been pointed out in \cite{BS00}.}.
\begin{theorem} \label{main}
There exists a continuous Hamiltonian $H: \m{T}^1 \times \m{R} \rightarrow \m{R}$ that is coercive in the momentum, such that $u_\lambda$ does not converge as $\lambda$ tends to 0. 
\end{theorem}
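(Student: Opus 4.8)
Replacing $H$ by $H-c(H)$ I may assume $c(H)=0$, so that $u_\lambda=v_\lambda$ and it is enough to exhibit a continuous, coercive $H$ for which $v_\lambda$ does not converge even though $\lambda v_\lambda\to 0$ uniformly. Comparison with the constant sub/supersolutions $\pm\|H(\cdot,0)\|_\infty/\lambda$ gives $\|\lambda v_\lambda\|_\infty\le\|H(\cdot,0)\|_\infty$, whence $H(x,Dv_\lambda(x))=-\lambda v_\lambda(x)$ is bounded and, by coercivity, $\|Dv_\lambda\|_\infty\le M$ for an $M$ depending only on $\|H(\cdot,0)\|_\infty$ and the coercivity modulus of $H$. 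So I only have to prescribe $H$ on the strip $\m{T}^1\times[-M,M]$ and then extend it in any continuous coercive way (for instance by gluing with $|p|^2$ outside), the a priori bound $\|Dv_\lambda\|_\infty\le M$ being preserved. The goal is then: find $\bar x\in\m{T}^1$, $\delta>0$ and two sequences $\lambda_k\downarrow0$, $\mu_k\downarrow0$ with $\limsup_k v_{\lambda_k}(\bar x)\le 0<\delta\le\liminf_k v_{\mu_k}(\bar x)$.

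\textbf{Non-convexity is essential; reduction to a game.}
By the theorem of Davini--Fathi--Iturriaga--Zavidovique, convergence holds for every $H$ convex in $p$, so $H$ must be genuinely non-convex. On the strip $|p|\le M$ I would take it of Isaacs type,
\[
H(x,p)=\sup_{a\in A}\ \inf_{b\in B}\bigl\{-f(x,a,b)\,p-\ell(x,a,b)\bigr\},
\]
with $A,B$ compact metric and $f,\ell$ continuous. For such $H$ the solution $v_\lambda$ is the (lower) value of a two-player zero-sum differential game on $\m{T}^1$ with dynamics $\dot x=f(x,a,b)$ and $\lambda$-discounted running cost $\ell$. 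I arrange that one of the players can by himself impose any velocity in $[-1,1]$ (forcing $H(x,p)\ge|p|-\|\ell\|_\infty$, hence coercivity) while $\sup_a\inf_b$ stays strictly below the value of the convexified problem at some $x$ (so $H$ is not convex there). The question is now purely game-theoretic: build a game whose $\lambda$-discounted value at a point oscillates as $\lambda\to0$.

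\textbf{The oscillating game.}
I would give this game a self-similar structure along a geometric sequence of scales. Cut the torus into countably many disjoint open arcs $I_k$ ($k\ge 1$) with $|I_k|\to0$ accumulating at $\bar x$, and let $f$ and $\ell$ on $I_k$ be a rescaled copy of those on $I_{k-1}$, but with the roles/incentives of the two players interchanged and the speeds and costs rescaled so that the game ``seen'' by $v_\lambda$ is effectively governed by scale $k$ exactly when $\lambda$ lies in a window $[\lambda_{k+1},\lambda_k]$. Because the incentives toggle with the parity of $k$, $v_\lambda(\bar x)$ is then driven toward $0$ for $\lambda\in[\lambda_{2k+1},\lambda_{2k}]$ and toward $\delta$ for $\lambda\in[\lambda_{2k},\lambda_{2k-1}]$. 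This is the Hamilton--Jacobi analogue of the known counterexamples to the existence of the asymptotic value in zero-sum stochastic games: the dynamic-programming operator of the discounted HJ equation (whose fixed point is $v_\lambda$) plays the role of the Shapley operator, and one transfers the non-convergence of its normalized fixed points to the present setting — but one must reconstruct the whole mechanism on $\m{T}^1$ so that it is encoded by a single Hamiltonian.

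\textbf{Verification and the main obstacle.}
It then remains to check that the countably many building blocks assemble into an $H$ that is continuous — especially at the accumulation point $\bar x$, which forces the amplitude of the $k$-th block and the chosen rescalings to decay fast enough as $k\to\infty$ — and coercive uniformly in $x$; that the comparison principle and the identification of $v_\lambda$ with the game value hold for this merely continuous coercive $H$ (standard); and, the crucial point, that the quantitative estimates $v_{\lambda_{2k}}(\bar x)\to\delta$ and $v_{\lambda_{2k-1}}(\bar x)\to 0$ really survive the limit $k\to\infty$. The heart of the matter, and the main obstacle, is precisely this last item combined with continuity at $\bar x$: there is a genuine tension between putting enough structure at every scale to flip the value by a fixed amount $\delta$, and making that structure decay quickly enough near $\bar x$ to yield a continuous (and coercive) Hamiltonian. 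Balancing the scales $\lambda_k$, the arc lengths $|I_k|$ and the rescaling exponents so that both demands are met — and, to close the recursion, controlling precisely how $v_\lambda$ restricted to $I_k$ depends on its boundary data — is where the real work lies.
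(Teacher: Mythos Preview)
Your proposal is a research outline, not a proof: you yourself name the main obstacle --- balancing the decay of the blocks near $\bar x$ against a fixed oscillation amplitude $\delta$ --- and leave it unresolved. No concrete $f,\ell$, arcs $I_k$, or scales $\lambda_k$ are produced, and the heuristic ``the game seen by $v_\lambda$ is governed by scale $k$ when $\lambda\in[\lambda_{k+1},\lambda_k]$'' is precisely the statement that needs proving, not a step one can assume. Whether a spatially self-similar differential-game construction of this kind can be made continuous at $\bar x$ while keeping $\delta>0$ is, in your write-up, still open.

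The paper's route is quite different and sidesteps this obstacle entirely. Rather than a differential game with oscillation encoded in the $x$-variable via countably many spatial blocks, it takes a \emph{two-state} discrete stochastic game and encodes the oscillation in the $p$-variable. Concretely, $H(x,p)=|x|\,H_1(|p|)+(1-|x|)\,H_{-1}(|p|)$ on $[-1,1]$ (extended $2$-periodically), where $H_1,H_{-1}$ are essentially the one-step Shapley operators of the game; the viscosity solution $u_\lambda$ is then explicit and piecewise affine in $x$, with $u_\lambda(1)$ and $u_\lambda(0)$ equal to a reparametrisation of the discounted game values $w_\lambda(\omega_1),w_\lambda(\omega_{-1})$. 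The non-convergence comes not from spatial self-similarity but from choosing Player~2's action set to be the countable set $J=\{2-\sqrt{2}+2^{-2n}:n\ge1\}\cup\{2-\sqrt{2}\}$: along $\lambda=\lambda_n$ the (approximately) optimal strategy $p^*(\lambda)$ lies in $J$, while along $\lambda=\mu_n$ it sits at distance of order $\mu_n$ from $J$, and this gap propagates to a fixed discrepancy in $w_\lambda(\omega_1)$. Continuity and coercivity of $H$ are then immediate from the explicit formula --- there is no accumulation point in $x$ and no delicate balancing of spatial scales. You correctly intuited the link with stochastic-game counterexamples, but the paper exploits it far more directly: it \emph{is} the Shapley equation, linearly interpolated in $x$, rather than a continuous-time reconstruction of it.
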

The example builds on a class of discrete-time repeated games called \textit{stochastic games}. The main ingredient is to establish a connection between recent counterexamples to the existence of the limit value in stochastic games (see \cite{vigeral13, Z13}) and the Hamilton-Jacobi problem\footnote{Let us mention the work \cite{KS06,KS10,IS11,Z16} as other illustrations of the use of repeated games in PDE problems.}.
\\
The remainder of the paper is structured as follows. Section 2 presents the stochastic game example. Section 3 shows that in order to prove Theorem \ref{main}, it is enough to study the asymptotic behavior of the stochastic game, when the discount factor vanishes. Section 4 determines the asymptotic behavior of the stochastic game. 
\section{The stochastic game example}
Given a finite set $A$, the set of probability measures over $A$ is denoted by $\Delta(A)$. Given $a \in A$, the Dirac measure at $a$ is denoted by $\delta_a$. 
\subsection{Description of the game}
%Let $c_0:=2-\sqrt{2}$. 
Consider the following stochastic game $\Gamma$, described by:
\begin{itemize}
\item
A state space $K$ with two elements $\omega_{1}$ and $\omega_{-1}$: $K=\left\{\omega_1,\omega_{-1} \right\}$,
\item
An action set $I=\left\{0,1\right\}$ for Player 1,
\item
An action set $J= \left\{2-\sqrt{2}+2^{-2n}, n \geq 1 \right\} \cup \left\{2-\sqrt{2} \right\}$ for Player 2,
\item
For each $(k,i,j) \in K\times I \times J$, a transition $q(. \, | k,i,j) \in \Delta(K)$ defined by:
\begin{eqnarray*}
q(.\,|\omega_1,i,j)&=&[i j+(1-i)(1-j)] \delta_{\omega_1}+[i(1-j)+(1-i)j] \delta_{\omega_{-1}},
\\
q(.\,|\omega_{-1},i,j)&=&[i (1-j)+(1-i)j] \delta_{\omega_1}+[i j+(1-i)(1-j)] \delta_{\omega_{-1}}. 
\end{eqnarray*}
\item
A payoff function $g: K \times I \times J \rightarrow [0,1]$, defined by
\begin{equation*}
g(\omega_1,i,j)=i j+2(1-i)(1-j) \quad \text{and} \quad g(\omega_{-1},i,j)=-i j-2(1-i)(1-j).
\end{equation*}
\end{itemize}
Let $k_1 \in K$. The stochastic game $\Gamma^{k_1}$ starting at $k_1$ proceeds as follows:
\\
\begin{itemize}
\item
The initial state is $k_1$. At first stage, Player 2 chooses $j_1 \in J$ and announces it to Player 1. Then, Player 1 chooses $i_1 \in I$, and announces it to Player 2. The payoff at stage 1 is $g(k_1,i_1 ,j_1)$ for Player 1, and $-g(k_1,i_1,j_1)$ for Player 2. A new state $k_2$ is drawn from the probability $q(. \ | k_1,i_1,j_1)$ and announced to both players. Then, the game moves on to stage 2.
\item
At each stage $m \geq 2$, Player 2 chooses $j_m \in J$ and announces it to Player 1. Then, Player 1 chooses $i_m \in I$, and announces it to Player 2. The payoff at stage $m$ is $g(k_m,i_m,j_m)$ for Player 1, and $-g(k_m,i_m,j_m)$ for Player 2. A new state $k_{m+1}$ is drawn from the probability $q(. \ | k_m,i_m,j_m)$ and announced to both players. Then, the game moves on to stage $m+1$. 
\end{itemize}
\begin{remark}
The action set of Player 2 can be interpreted as a set of randomized actions. Indeed, imagine that Player 2 has only two actions, $1$ and $0$. These actions are called \textit{pure actions}. At stage $m$, if Player 2 chooses $j_m \in J$, this means that he plays $1$ with probability $j_m$, and 
$0$ with probability $1-j_m$. Denote by $\widetilde{j_m} \in \left\{0,1\right\}$ his realized action. Player 1 knows $j_m$ before playing, but does not know $\widetilde{j_m}$. If Player 1 chooses $i_m \in I$ afterwards, then the realized payoff is $g(k_m,i_m,\widetilde{j_m})$. 
Thus, the payoff $g(k_m,i_m,j_m)$ represents the expectation of $g(k_m,i_m,\widetilde{j_m})$. Likewise, the transition $q(. \, | k_m,i_m,j_m)$ represents the law of $q(k_m,i_m,\widetilde{j_m})$. The transition and payoff in $\Gamma$ when players play pure actions can be represented by the following matrices: 
 \begin{table}[H]
\label{game}
\centering
\caption{Transition and payoff functions in state $\omega_1$ and $\omega_{-1}$}
   \vspace{0.4cm}
\begin{tabular}{ |l|*{3}{c|}}
\hline 
$\omega_1$
& 1 & 0 \\\hline
1 & $1$ & $\overrightarrow{0}$ \\\hline
0 & $\overrightarrow{0}$ & $2$ \\\hline
\end{tabular}
\hspace{1cm}
\begin{tabular}{|l|*{3}{c|}}
\hline %\backslashbox{J1}{J2}
$\omega_{-1}$
&  1 & 0 \\\hline
1 & $-1$ & $\overleftarrow{0}$ \\\hline
0 & $\overleftarrow{0}$ & $-2$ \\\hline
\end{tabular}
\end{table}
The left-hand side matrix stands for state $\omega_1$, and the right-hand side matrix stands for state $\omega_{-1}$. Consider the left-hand side matrix. Player 1 chooses a row (either $1$ or $0$), and Player 2 chooses a column (either $1$ or $0$). The payoff is given by the numbers: for instance, $g(1,1)=1$ and $g(1,0)=0$. The arrow means that when the corresponding actions are played, the state moves on to state $\omega_{-1}$; otherwise, it stays in $\omega_1$. For instance, $q(.|\omega_1,1,1)=\delta_{\omega_1}$ and $q(.|\omega_1,1,0)=\delta_{\omega_{-1}}$. The interpretation is the same for the right-hand side matrix. In the game $\Gamma$, Player 1 can play only pure actions (1 or 0), and Player 2 can play $1$ with some probability $j \in J$. 
\\
This matrix representation is convenient to understand the strategic aspects of the game. 
\end{remark}
\vspace{0.3cm}
Let us now define formally \textit{strategies}. 
In general, the decision of a player at stage $m$ may depend on all the information he has: that is, the stage $m$, and all the states and actions before stage $m$. In this paper, it is sufficient to consider a restricted class of strategies, called \textit{stationary strategies}. Formally, a stationary strategy for Player 1 is defined as a mapping 
 $y:K \times J \rightarrow I$. The interpretation is that at stage $m$, if the current state is $k$, and Player 2 plays $j$, then Player 1  plays $y(k,j)$. Thus, Player 1 only bases his decision 
 on the current state and the current action of Player 2. Denote by $Y$ the set of stationary strategies for Player 1. 
 \\
 A stationary strategy for Player 2 is defined as a mapping 
 $z:K \rightarrow J$. The interpretation is that at stage $m$, if the current state is $k$, then Player 2  plays $z(k)$. Thus, Player 2 only bases his decision 
 on the current state. Denote by $Z$ the set of stationary strategies for Player 2. 
 \\
The sequence $(k_1,i_1,j_1,k_2,i_2,j_2,...,k_m,i_m,j_m,...) \in H_\infty:=(K \times I  \times J)^{\m{N}^*}$ generated along the game is called \textit{history} of the game. Due to the fact that state transitions are random, this is a random variable. The law of this random variable depends on the initial state $k_1$ and the pair of strategies $(y,z)$, and is denoted by $\mathbb{P}^{k_1}_{y,z}$. 
 \\
%A pair of strategies $(x,y)$ induces a unique probability measure on $H_\infty$, denoted by $\mathbb{P}^{k_1}_{\sigma,\tau}$. 
We will call $g_m$ the $m$-stage random payoff $g(k_m,i_m,j_m)$. 
Let $\lambda \in (0,1]$. The game $\Gamma^{k_1}_\lambda$ is the game where the strategy set of Player 1 (resp. 2) is $Y$ (resp. $Z$), and the payoff is %defined by its normal form $(\Sigma,\mathcal{T},
$\gamma_{\lambda}^{k_1}$, where 
\begin{equation*}
\gamma^{k_1}_{\lambda}(y,z)=\mathbb{E}^{k_1}_{y,z}\left(\sum_{m \geq 1} (1-\lambda)^{m-1} g_m \right).
\end{equation*}
The goal of Player 1 is to maximize this quantity, while the goal of Player 2 is to minimize this quantity. 
%Note that the optimal choice of Player 1 depends on Player 2's choice, and reciprocally. To avoid this circular problem, we consider that Player 2 chooses first his strategy, and announced it to Player 1; then, Player 1 chooses his strategy. 
The game $\Gamma^{k_1}_\lambda$ has a value, that is:
\begin{equation*}
\min_{z \in Z} \max_{y \in Y} \gamma^{k_1}_{\lambda}(y,z)=\max_{y \in Y} \min_{z \in Z} \gamma^{k_1}_{\lambda}(y,z).
\end{equation*}
The value of $\Gamma^{k_1}_\lambda$ is then defined as the above quantity, and is denoted by $w_{\lambda}(k_1)$. A strategy for Player 1 is \textit{optimal} if it achieves the right-hand side maximum, and a strategy for Player 2 is \textit{optimal} if it achieves the left-hand side minimum. 
%Define the \textit{value} of $\Gamma^{k_1}_\lambda$ as being the real number $v_{\lambda}(k_1) \in \m{R}$ defined by
%%The game $\Gamma^{k_1}_\lambda$ has a value, called $v_{\lambda}(k_1)$:
%\begin{equation*}
%v_{\lambda}(k_1)=\min_{b \in J^2} \max_{a \in I^2} \gamma^{k_1}_{\lambda}(a,b).
%%=\min_{\sigma \in \Sigma} \max_{\tau \in \mathcal{T}} \gamma^{k_1}_{\lambda}(\sigma,\tau)
%\end{equation*}
The interpretation is that if players are rational they should play optimal strategies, and as a result Player 1 should get $w_{\lambda}(k_1)$, and Player 2 should get $-w_{\lambda}(k_1)$. 
\subsection{Asymptotic behavior of the discounted value} \label{asympt}
As we shall see in the next section, for each $\lambda \in (0,1]$, one can associate a discounted Hamilton-Jacobi equation with $c(H)=0$, such that its solution evaluated at $x=1$ is approximately $w_\lambda(\omega_1)$, for $\lambda$ small enough. Thus, the asymptotic behavior of this quantity needs to be studied. 
\\
\\
Define $\lambda_n:=\displaystyle 2^{-2n}\left(\frac{3}{4}-\frac{1}{\sqrt{2}} \right)^{-1}$ and $\mu_n:=\displaystyle 2^{-2n-1}\left(\frac{3}{4}-\frac{1}{\sqrt{2}} \right)^{-1}$. 
\begin{proposition} \label{propsto}
The following hold: 
\begin{enumerate}[(i)]
%\item \label{prop1}
%$0 \leq v_{\lambda}(\omega_{1}) \leq 2 \lambda$
\item \label{prop2}
$w_{\lambda}(\omega_{-1}) \leq w_{\lambda}(\omega_1) \leq w_{\lambda}(\omega_{-1})+2$
%\item \label{prop3}
%\begin{equation*} 
%\lim_{\lambda \rightarrow 0} w_{\lambda}(\omega_1)=\lim_{\lambda \rightarrow 0} w_{\lambda}(\omega_{-1})=0 
%\end{equation*}
\item \label{key}
$\lim_{n \rightarrow +\infty} w_{\lambda_n}(\omega_1)=1/\sqrt{2}$ and 
$\liminf_{n \rightarrow +\infty} w_{\mu_n}(\omega_1) >1/\sqrt{2}$. Consequently, $(w_{\lambda}(\omega_1))$ does not have a limit when $\lambda \rightarrow 0$. 
\end{enumerate}
\end{proposition}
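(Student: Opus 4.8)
The plan is to reduce both statements to a scalar fixed-point equation for the discounted values and then to analyse that equation asymptotically; the oscillation will come entirely from the fact that $J$ is discrete.

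\emph{Reduction via the Shapley equation.} Since $\Gamma^{k_1}_\lambda$ has a value in stationary strategies and Player 2 plays first at each stage, the pair $(a,b):=(w_\lambda(\omega_1),w_\lambda(\omega_{-1}))$ satisfies the one-stage dynamic-programming equations $a=\min_{j\in J}\max_{i\in I}\big[g(\omega_1,i,j)+(1-\lambda)\sum_{k'}q(k'\,|\,\omega_1,i,j)w_\lambda(k')\big]$, and likewise for $b$. Writing out $i\in\{0,1\}$, the bracket in state $\omega_1$ is $\max\{C_1(j)+j,\ C_2(j)+2(1-j)\}$ and in state $\omega_{-1}$ it is $\max\{C_1(j)-2(1-j),\ C_2(j)-j\}$, where $C_1(j)=(1-\lambda)(ja+(1-j)b)$, $C_2(j)=(1-\lambda)((1-j)a+jb)$. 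Term by term the first maximand equals the second plus $(2-j)$; hence, setting $\Psi(j):=\max\{C_1(j)-2(1-j),\ C_2(j)-j\}$, one gets $b=\min_{j\in J}\Psi(j)$ and $a=\min_{j\in J}\big[\Psi(j)+2-j\big]$.

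\emph{Proof of (i), and the scalar equation.} If $j_b$ minimises $\Psi$ then $a\le\Psi(j_b)+2-j_b=b+2-j_b\le b+\sqrt2\le b+2$ since every $j\in J$ satisfies $j\ge 2-\sqrt2$; if $j_a$ attains $\min_{j}[\Psi(j)+2-j]$ then, as $j_a<2$, $a=\Psi(j_a)+(2-j_a)>\Psi(j_a)\ge b$. This is (i). Now set $b=a-s$ with $s:=a-b$ (a change of variable only). Then $C_1(j)=(1-\lambda)(a-(1-j)s)$, $C_2(j)=(1-\lambda)(a-js)$, and one computes $a=(1-\lambda)a+\phi_A(\sigma)$, $b=(1-\lambda)a+\phi_B(\sigma)$, where $\sigma:=(1-\lambda)s$ and
\[
\phi_A(\sigma):=\min_{j\in J}\max\{\,j(1+\sigma)-\sigma,\ 2-j(2+\sigma)\,\},\qquad \phi_B(\sigma):=\min_{j\in J}\max\{\,-j(1+\sigma),\ -(1-j)(2+\sigma)\,\}.
\]
Hence $\lambda\,w_\lambda(\omega_1)=\phi_A(\sigma)$ and $\sigma=(1-\lambda)\big(\phi_A(\sigma)-\phi_B(\sigma)\big)$. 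Both $\phi_A,\phi_B$ are nonincreasing and $1$-Lipschitz, so $\sigma\mapsto(1-\lambda)(\phi_A(\sigma)-\phi_B(\sigma))-\sigma$ has a.e.\ derivative $\le-\lambda<0$: the fixed point $\sigma^\ast(\lambda)$ is unique, lies in $[0,\sqrt2)$ by (i), and $\sigma^\ast(\lambda)\to\sqrt2$ as $\lambda\to0$ because $\phi_A(\sqrt2)=0$ and $\phi_B(\sqrt2)=-\sqrt2$ (the unconstrained minimiser $j^\ast(\sqrt2)=2-\sqrt2$ lies in $J$).

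\emph{Asymptotics and the oscillation.} Near $\sigma=\sqrt2$ the unconstrained minimiser in both expressions is $j^\ast(\sigma)=(2+\sigma)/(3+2\sigma)$, and one verifies the two identities $j^\ast(\widetilde\sigma(\lambda))=2-\sqrt2+(3/4-1/\sqrt2)\lambda+O(\lambda^2)$ and $(2-\widetilde\sigma(\lambda)^2)\big/\!\big((3+2\widetilde\sigma(\lambda))\lambda\big)\to 1/\sqrt2$, where $\widetilde\sigma(\lambda)$ solves the continuous equation $\sigma=(1-\lambda)(4+3\sigma)/(3+2\sigma)$ (this $\widetilde\sigma$ governs $\sigma^\ast$ whenever $j^\ast(\widetilde\sigma(\lambda))\in J$, since then the discrete and continuous minima coincide). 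At $\lambda=\lambda_n=2^{-2n}(3/4-1/\sqrt2)^{-1}$ this gives $j^\ast(\widetilde\sigma(\lambda_n))=2-\sqrt2+2^{-2n}+O(2^{-4n})$, i.e.\ within $O(2^{-4n})$ of the point $2-\sqrt2+2^{-2n}\in J$; hence $\phi_A,\phi_B$ at $\widetilde\sigma(\lambda_n)$ equal their continuous values up to $O(2^{-4n})$, $\sigma^\ast(\lambda_n)=\widetilde\sigma(\lambda_n)+O(2^{-4n})$, and $w_{\lambda_n}(\omega_1)=\phi_A(\sigma^\ast(\lambda_n))/\lambda_n\to 1/\sqrt2$. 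At $\lambda=\mu_n=\lambda_n/2$, by contrast, $j^\ast(\widetilde\sigma(\mu_n))=2-\sqrt2+2^{-2n-1}+O(\mu_n^2)$ lies strictly between the consecutive points $2-\sqrt2+2^{-2n-2}$ and $2-\sqrt2+2^{-2n}$ of $J$, so $\phi_A(\sigma)$ exceeds its continuous value by a quantity of order $\mu_n$; propagating this excess through $\sigma=(1-\lambda)(\phi_A-\phi_B)$ (which damps but does not annihilate it, by the slope bound above) yields $w_{\mu_n}(\omega_1)=\phi_A(\sigma^\ast(\mu_n))/\mu_n\ge 1/\sqrt2+\kappa$ for an explicit constant $\kappa>0$ independent of $n$. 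Therefore $\lim_n w_{\lambda_n}(\omega_1)=1/\sqrt2<\liminf_n w_{\mu_n}(\omega_1)$, and $(w_\lambda(\omega_1))$ has no limit as $\lambda\to0$.

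\emph{Main obstacle.} The reduction and part (i) are purely algebraic; the real work is the last step. One must show that near $\sigma^\ast(\lambda)$ the minimising element of $J$ is one of the (at most two) neighbours of $j^\ast(\sigma)$ — ruling out $j$ bounded away from $2-\sqrt2$ — control every $O(\cdot)$ remainder uniformly in $n$, and, crucially, check that at the detuned scales $\mu_n$ the discretisation penalty in $\phi_A$ survives the feedback through the fixed-point equation and leaves a gap $\kappa>0$ not shrinking with $n$. That uniform separation at $\mu_n$ is the heart of the non-convergence.
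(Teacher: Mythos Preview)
Your reduction in the first two paragraphs is correct and rather slicker than what the paper does. In particular, your proof of (i) via the identity $a=\min_j[\Psi(j)+2-j]$, $b=\min_j\Psi(j)$ is clean and even yields the sharper bound $w_\lambda(\omega_1)\le w_\lambda(\omega_{-1})+\sqrt2$. The paper instead plugs test values into the Shapley equation in each state and combines the resulting four inequalities.

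For (ii), your route is genuinely different from the paper's. You reduce everything to a single scalar fixed-point equation $\sigma=(1-\lambda)\big(\phi_A(\sigma)-\phi_B(\sigma)\big)$ and analyse how the discreteness of $J$ perturbs $\phi_A,\phi_B$ away from their ``continuous'' versions. The paper never does this: it fixes an explicit threshold strategy $y$ for Player~1, solves the resulting two-state Markov chain to obtain closed-form expressions $\gamma^{i,i'}_\lambda(a,b)$ for the discounted payoff when Player~2 plays $(a,b)$, establishes monotonicity of each $\gamma^{i,i'}_\lambda$ in $a$ and $b$ separately, and then does a four-case analysis (according to whether $a,b$ lie above or below $p^*(\mu_n)$) to obtain the explicit lower bound $\liminf_n w_{\mu_n}(\omega_1)\ge \tfrac{5}{2\sqrt2}-1$. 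Your approach is more conceptual and would generalise more easily; the paper's is more bare-handed but delivers explicit constants without any perturbation analysis.

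There is, however, a real gap in your argument, precisely at the point you flag as the ``main obstacle.'' For the $\mu_n$ case you assert that the discretisation penalty in $\phi_A$ ``survives the feedback through the fixed-point equation and leaves a gap $\kappa>0$,'' but you do not prove this. The difficulty is genuine: both $\phi_A$ and $\phi_B$ pick up a positive discretisation error of order $\mu_n$, and the shift in the fixed point $\sigma^\ast-\widetilde\sigma$ feeds back into $\phi_A$ through $(\phi_A^{\mathrm{cont}})'(\sqrt2)=8-6\sqrt2<0$, which works \emph{against} the direct penalty. To conclude you must compute the slopes on each side of $j^\ast$ for both $\phi_A$ and $\phi_B$, identify which neighbouring $J$-point is chosen in each, track the resulting shift in $\sigma^\ast$ quantitatively (using that $T'$ is small, not merely $<1$, near $\sqrt2$), and then check that the net contribution to $\phi_A(\sigma^\ast)/\mu_n$ is bounded below by a positive constant. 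None of this is carried out. A similar (milder) issue affects your $\lambda_n$ argument: the claim $\sigma^\ast(\lambda_n)=\widetilde\sigma(\lambda_n)+O(2^{-4n})$ requires the strong-contraction property of $T$ near $\sqrt2$, not just the $-\lambda$ slope bound you invoke, and you have not verified that the $O(\lambda_n^2)$ estimate on $\phi_A-\phi_A^{\mathrm{cont}}$ holds uniformly on a neighbourhood of $\widetilde\sigma(\lambda_n)$ rather than at a single point. The paper's explicit computations sidestep all of this.
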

The proof of the above proposition is done in Section \ref{proofprop}. As far as the proof of Theorem \ref{main} is concerned, the key point is \ref{key}. is Let us give here some piece of intuition for this result. Consider the game $\Gamma'$ that is identical to $\Gamma$, except that
Player 2's action set is $[0,1]$ instead of $J$. For each $\lambda \in (0,1]$, denote by $w'_{\lambda}$ its discounted value. Because $J \subset [0,1]$, Player 2 is better off in the game $\Gamma'$ compared to the game $\Gamma$: $w'_\lambda \leq w_\lambda$. Interpret now $\Gamma$ and $\Gamma'$ as games with randomized actions, as in Table \ref{game}. 
As $\lambda$ vanishes, standard computations show that an (almost) optimal stationary strategy for Player 2 in $\Gamma'^{\omega_1}_{\lambda}$ is to play $1$ with probability 
$p^*(\lambda):=2-\sqrt{2}+\left(\frac{3}{4}-\frac{1}{\sqrt{2}}\right) \lambda$ in both states $\omega_1$ and $\omega_{-1}$, and $(w_{\lambda}(\omega_1))$ converges to $\frac{1}{\sqrt{2}}$. %and $w_{\lambda}(\omega_{-1})$ converges to $-\frac{1}{\sqrt{2}}$. 
%Consequently,
%$\lim_{\lambda \rightarrow 0} w_{\lambda}(\omega_1)=\lim_{\lambda \rightarrow 0} w_{\lambda}(\omega_{-1})=0$.
\\
Moreover, for all $n \geq 1$, $p^*(\lambda_n) \in J$. Thus, this strategy is available for Player 2 in $\Gamma$, and consequently $w_{\lambda_n}(\omega_1)=w'_{\lambda_n}(\omega_1)+O(\lambda_n)$, as $n$ tends to infinity. 
\\
On the other hand, for all $n \geq 1$, $p^*(\mu_n) \notin J$, and the distance of $p^*(\mu_n)$ to $J$ is larger than $\left(\frac{3}{4}-\frac{1}{\sqrt{2}}\right) \mu_n/2$. Consequently, the distance of the optimal strategy in $\Gamma^{\omega_1}_{\mu_n}$ to the optimal strategy in $\Gamma'^{\omega_1}_{\mu_n}$ is of order $\mu_n$. This produces a payoff difference of order $\mu_n$ at each stage, and thus of order 1 in the whole game. Thus,  Player 2 is significantly disadvantaged in $\Gamma^{\omega_1}_{\mu_n}$ compared to $\Gamma'^{\omega_1}_{\mu_n}$, and the difference between $w_{\mu_n}(\omega_1)$ and $w'_{\mu_n}(\omega_1)$ is of order 1. %, and thus a difference of order 1 between $v_{\mu_m}(\omega_1)/\mu_m$ and $v'_{\mu_m}(\omega_1)/\mu_m$. 
\\

\begin{remark}
As we shall see in the following section, we have $\lim_{\lambda \rightarrow 0} \lambda w_{\lambda}(\omega_1)=\lim_{\lambda \rightarrow 0} \lambda w_{\lambda}(\omega_{-1})=0$. 

\end{remark}

The next section explains how to derive the counterexample and Theorem \ref{main} from Proposition \ref{propsto}.  
\section{Link with the PDE problem and proof of Theorem \ref{main}}
%In this subsection, we assume that $K=\left\{\omega_1,\omega_{-1} \right\}$. 
The following proposition expresses $w_\lambda$ as the solution of a functional equation called \textit{Shapley equation}. 
\begin{proposition} \label{shapley}
Let $\lambda \in (0,1]$ and $u_\lambda:=(1+\lambda)^{-1}w_{\lambda/(1+\lambda)}$. 
For each $r \in \left\{-1,1\right\}$, the two following equations hold: 
\begin{enumerate}[(i)]
\item \label{shapleyv}
\begin{eqnarray*}
w_\lambda(\omega_r)&=&\min_{j \in J} \max_{i \in I} \left\{ g(\omega_r,i,j)+(1-\lambda) 
 \left[q(\omega_{r} | \omega_r,i,j) w_\lambda(\omega_r) +q(\omega_{-r} | \omega_r,i,j) w_{\lambda}(\omega_{-r})\right] \right\}
%\\
%&=&\max_{i \in I} \min_{j \in J} \left\{\lambda g(k_1,i,j)+(1-\lambda) 
%\sum_{(i,j) \in  I \times J} \sigma(i) \tau(j) \left[q(k_1,i,j)\right]^{k_2} \left[v_\lambda(k_2)-v_\lambda(k_1) \right]\right\}+(1-\lambda) v_{\lambda}(k_1)
\end{eqnarray*}
\item \label{shapleyu}
\begin{equation*}
\lambda u_\lambda(\omega_r)=\min_{j \in J} \max_{i \in I} \left\{ g(\omega_r,i,j)+
q(\omega_{-r}|\omega_r,i,j) \left[u_\lambda(\omega_{-r})-u_\lambda(\omega_r) \right]\right\}
\end{equation*}
%\item
%\begin{equation*}
%\lambda u_\lambda(\omega_r)=\min_{j \in J} \max_{i \in I, z \in \left\{-1,0,1\right\}} \left\{ g(\omega_r,i,j)-10 \left|z\right|+
%\left(\left[q(\omega_r,i,j)\right]^{\omega_{-r}}+z \right) \left[u_\lambda(\omega_{-r})-u_\lambda(\omega_r) \right]\right\}-C
%\end{equation*}
\begin{proof}
\begin{enumerate}
\item
The intuition is the following. Consider the game $\Gamma^{\omega_r}_{\lambda}$. 
At stage 1, the state is $\omega_r$. The term $g$ represents the current payoff, and the term $(1-\lambda)[...]$ represents the future optimal payoff, that is, the payoff that Player 1 should get from stage 2 to infinity. Thus, this equation means that the value of $\Gamma^{\omega_r}_{\lambda}$ coincides with the value of the one-stage game, where the payoff is a combination of the current payoff and the future optimal payoff. For a formal derivation of this type of equation, we refer to \cite[VII.1., p. 392]{MSZ}.
%Let $N \geq 1$. Consider the stochastic game $\Gamma^N$ where the state space is $K$, the action set of Player 1 is $\left\{0,1\right\}^N$, the action set of Player 2 is $\left\{0,1\right\}$, and the payoff function is 
%\begin{equation*}
%g^N(k,i,j):= \left\{
%\begin{array}{ll}
%\displaystyle g(k,i(j),j) & \mbox{if} \ j \leq N-1, \\
%g(k,i(N),j) & \mbox{if} \ \ j \geq N,
%\end{array}
%\right.
%\end{equation*}
%and the transition function is
%\begin{equation*}
%q^N(k,i,j):= \left\{
%\begin{array}{ll}
%\displaystyle q(k,i(j),j) & \mbox{if} \ j \leq N-1, \\
%q(k,i(N),j) & \mbox{if} \ \ j \geq N.
%\end{array}
%\right.
%\end{equation*}
%This stochastic game has 
%g^N(k,i,j):=g(k,i(j),j)
%\end{equation*}
\item
Evaluating the previous equation at $\lambda/(1+\lambda)$ yields
\begin{eqnarray*}
w_{\frac{\lambda}{1+\lambda}}(\omega_r)&=&\min_{j \in J} \max_{i \in I} \left\{g(\omega_r,i,j)+ 
 \frac{1}{1+\lambda}[q(\omega_{r} | \omega_r,i,j) w_{\frac{\lambda}{1+\lambda}}(\omega_r) +q(\omega_{-r}|\omega_r,i,j) w_{\frac{\lambda}{1+\lambda}}(\omega_{-r})] \right\}
\end{eqnarray*}
Using the fact that $q(\omega_{r} | \omega_r,i,j)=1-q(\omega_{-r}|\omega_r,i,j)$ yields the result.

 % from between If from stage 2, Player 1 plays a strategy that is optimal strategy Players know that from stage 2, they will be able to play optimally in $\Gamma^{\omega_r}_{\lambda}$
\end{enumerate}
\end{proof}
%In a similar fashion, 
%\begin{equation*}
%\lambda u_\lambda(\omega_{-1})=\min_{j \in J} \max_{i \in I} \left\{ g(\omega_{-1},i,j)+
%\left[q(\omega_{-1},i,j)\right]^{\omega_1} \left[u_\lambda(\omega_1)-u_\lambda(\omega_{-1}) \right]\right\}-C
%\end{equation*}
\end{enumerate}
\end{proposition}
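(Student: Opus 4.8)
The plan is to prove the two parts in order, with (ii) a purely algebraic consequence of (i).

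\emph{Part (i).} I would recast this as a contraction fixed-point statement. Define the Shapley operator $\Phi\colon\m{R}^{K}\to\m{R}^{K}$ by
\[
(\Phi f)(\omega_r):=\min_{j\in J}\max_{i\in I}\Bigl\{g(\omega_r,i,j)+(1-\lambda)\bigl[q(\omega_r\,|\,\omega_r,i,j)f(\omega_r)+q(\omega_{-r}\,|\,\omega_r,i,j)f(\omega_{-r})\bigr]\Bigr\}.
\]
First I would check $\Phi$ is well defined: $I$ is finite and $J$ is compact (a convergent sequence together with its limit point), and for fixed $f$ the bracketed expression is affine, hence continuous, in $j$; so the inner $\max$ is continuous on $J$ and the outer $\min$ is attained. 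Since $\min$, $\max$ and integration against a probability measure are all $1$-Lipschitz for $\|\cdot\|_{\infty}$, the map $\Phi$ is a $(1-\lambda)$-contraction on $(\m{R}^{K},\|\cdot\|_{\infty})$, so it has a unique fixed point $\phi$.

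Next I would identify $w_\lambda$ with $\phi$. For the bound $w_\lambda\le\phi$, let $z^{\star}\in Z$ pick, in each state, a minimizer realizing $\Phi\phi=\phi$. For any $y\in Y$, stationarity of the game and of the strategies gives, with $a:=y(\omega_r,z^{\star}(\omega_r))$, the recursion $\gamma_\lambda^{\omega_r}(y,z^{\star})=g(\omega_r,a,z^{\star}(\omega_r))+(1-\lambda)\sum_{k'}q(k'\,|\,\omega_r,a,z^{\star}(\omega_r))\gamma_\lambda^{k'}(y,z^{\star})$, while the definition of $z^{\star}$ yields $\phi(\omega_r)\ge g(\omega_r,a,z^{\star}(\omega_r))+(1-\lambda)\sum_{k'}q(k'\,|\,\omega_r,a,z^{\star}(\omega_r))\phi(k')$. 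Subtracting, $h:=\gamma_\lambda(y,z^{\star})-\phi$ obeys $h(\omega_r)\le(1-\lambda)\sum_{k'}q(k'\,|\,\cdot)h(k')\le(1-\lambda)\max_{k'}h(k')$; taking the maximum over the state forces $\max_k h(k)\le0$, i.e. $\gamma_\lambda^{k_1}(y,z^{\star})\le\phi(k_1)$ for every $y$, hence $w_\lambda(k_1)=\min_z\max_y\gamma_\lambda^{k_1}\le\phi(k_1)$. The reverse inequality is symmetric: letting $y^{\star}(\omega_r,j)$ pick a maximizer of $i\mapsto g(\omega_r,i,j)+(1-\lambda)[\cdots]$ for each $(\omega_r,j)$ and running the same comparison against an arbitrary $z\in Z$ gives $\gamma_\lambda^{k_1}(y^{\star},z)\ge\phi(k_1)$, whence $w_\lambda=\max_y\min_z\gamma_\lambda\ge\phi$. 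Thus $w_\lambda=\phi=\Phi w_\lambda$, which is (i). (Alternatively one may simply invoke the general Shapley theorem, as in the reference cited in the excerpt.)

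\emph{Part (ii).} This requires no game theory. I would evaluate the identity of (i) at the discount factor $\lambda/(1+\lambda)$, use $1-\tfrac{\lambda}{1+\lambda}=\tfrac{1}{1+\lambda}$ and $w_{\lambda/(1+\lambda)}=(1+\lambda)u_\lambda$, so that the factor $\tfrac{1}{1+\lambda}$ inside the bracket cancels the $(1+\lambda)$ carried by $u_\lambda$ and one obtains $(1+\lambda)u_\lambda(\omega_r)=\min_j\max_i\{g(\omega_r,i,j)+q(\omega_r\,|\,\omega_r,i,j)u_\lambda(\omega_r)+q(\omega_{-r}\,|\,\omega_r,i,j)u_\lambda(\omega_{-r})\}$. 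Then I substitute $q(\omega_r\,|\,\omega_r,i,j)=1-q(\omega_{-r}\,|\,\omega_r,i,j)$, pull the resulting free summand $u_\lambda(\omega_r)$ out of $\min_j\max_i$, and subtract $u_\lambda(\omega_r)$ from both sides to reach $\lambda u_\lambda(\omega_r)=\min_j\max_i\{g(\omega_r,i,j)+q(\omega_{-r}\,|\,\omega_r,i,j)[u_\lambda(\omega_{-r})-u_\lambda(\omega_r)]\}$.

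\emph{Main obstacle.} Everything outside the dynamic-programming step in Part (i) is bookkeeping; the one point that deserves care is the identification of the infinite-horizon value with the fixed point of $\Phi$, and even that is routine once stationarity is exploited and $g$ is bounded.
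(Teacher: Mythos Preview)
Your proposal is correct and follows essentially the same route as the paper. For (i) the paper only sketches the dynamic-programming intuition and cites a reference, whereas you supply the standard contraction/fixed-point argument explicitly (and, as a bonus, your two-sided comparison with $y^{\star},z^{\star}$ actually re-proves that $\min_z\max_y=\max_y\min_z$); for (ii) your computation is identical to the paper's.
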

%For $k_1 \in K$, define $a_1(\sigma,\tau) \in \Delta(K)$ by
%\begin{equation}
%a_1(\sigma,\tau):=\sum_{(i,j) \in I \times J} \sigma(i) \tau(j) \left[q(\omega_1,i,j)\right]^{\omega_{-1}},
%\end{equation}
%and
%\begin{equation}
%a_2(\sigma,\tau):=\sum_{(i,j) \in I \times J} \sigma(i) \tau(j) \left[q(\omega_{-1},i,j)\right]^{\omega_1},
%\end{equation}
%\begin{equation}
%H_r(p):=-\max_{x \in \Delta(I)} \min_{y \in \Delta(J)} \left\{ g(k,i,j)+ a_r(x,y) \cdot p  \right\}.
%\end{equation}
For $p \in \m{R}$, define
$H_1:\m{R} \rightarrow \m{R}$ and $H_{-1}:\m{R} \rightarrow \m{R}$ by
\begin{equation*}
H_1(p):= \left\{
\begin{array}{ll}
\displaystyle 
-\min_{j \in J} \max_{i \in I}
\left\{ g(\omega_1,i,j)-p \cdot
([i(1-j)+(1-i)j] \right\},
 & \mbox{if} \ |p| \leq 2, \\
H_1\left(2\frac{p}{|p|}\right)+|p|-2 & \mbox{if} \  |p| > 2.
\end{array}
\right.
\end{equation*}
\begin{equation*}
H_{-1}(p):= \left\{
\begin{array}{ll}
\displaystyle 
-\min_{j \in J} \max_{i \in I}
\left\{ g(\omega_{-1},i,j)+p \cdot
([i(1-j)+(1-i)j] \right\},
 & \mbox{if} \ |p| \leq 2, \\
H_{-1}\left(2\frac{p}{|p|}\right)+|p|-2 & \mbox{if} \  |p| > 2.
\end{array}
\right.
\end{equation*}
For $x \in [-1,1]$ and $p \in \m{R}$, let
\begin{equation} \label{hamiltonian}
H(x,p):=|x| H_1(\left|p\right|)+(1-|x|)H_{-1}(\left|p\right|). 
\end{equation}
Note that the definition of $H_1$ and $H_{-1}$ for $|p| > 2$ ensures that $\lim_{|p| \rightarrow +\infty} H_1(p)=\lim_{|p| \rightarrow +\infty} H_{-1}(p)=+\infty$, thus
$\lim_{|p| \rightarrow +\infty} H(p)=+\infty$. 
Note also that for all $x \in [-1,1]$, $H_1(x,.)$ is increasing on $[-2,2]$ and $H_{-1}(x,.)$ is decreasing on $[-2,2]$.
%\begin{equation}
%H_1(p)=
%\min_{j \in J} \max_{i \in I}
%\left\{ g(\omega_1,i,j)-p \cdot
%([i(1-j)+(1-i)j] \right\},
%\end{equation}
%and
%\begin{equation}
%H_{-1}(p)=\min_{j \in J} \max_{i \in I}
%\left\{  g(\omega_{-1},i,j)+p \cdot  [i(1-j)+(1-i)j]) \right\}.
%\end{equation}
\\
Thanks to Proposition \ref{shapley} \ref{shapleyu} and Proposition \ref{propsto} \ref{prop2}, we have $\lambda u_\lambda(\omega_1)+H_1(u_\lambda(\omega_1)-u_\lambda(\omega_{-1}))=0$ and 
$\lambda u_\lambda(\omega_{-1})+H_{-1}(u_\lambda(\omega_1)-u_\lambda(\omega_{-1}))=0$.
\\
\\
For $x \in [-1,1]$, let $u_{\lambda}(x)=|x| u_{\lambda}(\omega_1)+(1-|x|) u_{\lambda}(\omega_{-1})$. Let $x \in (-1,1) \setminus \left\{0\right\}$. Proposition \ref{propsto} \ref{prop2} implies that $w_{\lambda}(\omega_{-1}) \leq w_{\lambda}(\omega_1)$, thus $u_{\lambda}(\omega_{-1}) \leq u_{\lambda}(\omega_1)$ and $|Du_{\lambda}(x)|=u_{\lambda}(\omega_1)-u_{\lambda}(\omega_{-1})$. 
Consequently, Proposition \ref{shapley} \ref{shapleyu} yields %for all $x \in (-1,1) \setminus \left\{0\right\}$,
\begin{equation} \label{HJBex}
\lambda u_\lambda(x)+H(x,Du_\lambda(x))=0.
\end{equation}
Note that the above equation is identical to equation (\ref{HJBex}). The reason why we use the notation $u_\lambda$ and not $v_\lambda$ is that, as we shall see, $c(H)=0$, thus $u_\lambda$ coincides with $v_\lambda$.

Extend $u_\lambda$ and $H(.,p)$ ($p \in \m{R}$) as 2-periodic functions defined on $\m{R}$. The Hamiltonian $H$ is continuous and coercive in the momentum, and the above equation holds in a classical sense for all $x \in \m{R} \setminus \m{Z}$. 
%Define
%\begin{equation*}
%H(x,p):=x H_1(p)+(1-x) H_{-1}(p),
%\end{equation*}
%and
%\begin{equation*}
%u_\lambda(x)=x u_\lambda(\omega_1)+(1-x) u_\lambda(\omega_{-1}).
%\end{equation*}
%Then $u_\lambda$ satisfies the discounted Hamilton-Jacobi equation
%\begin{equation}
%\lambda u_\lambda(x) +H(x,Du_\lambda(x))
%\end{equation}
%The only difference with the discounted Hamilton-Jacobi equation of the previous section is that the equation is defined on the simplex $\Delta(K)$, and not on a torus. 
%\\
%$H$ is obviously continuous and Lipschitz in $p$. Moreover, $(u_\lambda)$ is 1-Lipschitz. 
%\\
%On peut étendre $u_\lambda$ sur $\m{R}$ en une fonction 2-périodique, et de même pour $H(.,p)$; de telle sorte que $u_\lambda: \m{R}\rightarrow \m{R}$ soit solution sur $\m{R}$ privé de $\m{Z}$ de l'équation de Hamilton-Jacobi ci-dessus, au sens classique. Montrons que $u_\lambda$ est solution au sens de viscosité sur $\m{R}$ tout entier. Par 2-périodicité, il suffit de montrer qu'elle est solution au sens de viscosité en $x=0$ et $x=1$. 
\\
For $x \in \m{R}$, denote by $D^+u_\lambda(x)$ (resp., $D^- u_\lambda(x)$) the super-differential (resp., the sub-differential) of $u_\lambda$ at $x$. 
Let us show that $u_{\lambda}$ is a viscosity solution of (\ref{HJBex}) on $\m{R}$. By 2-periodicity, it is enough to show that this is a viscosity solution for $x=0$ and $x=1$. 
\\
Let us start by $x=0$. We have $D^+ u_{\lambda}(0)=\emptyset$ and $D^- u_{\lambda}(0)=[u_\lambda(\omega_{-1})-u_\lambda(\omega_{1}),u_\lambda(\omega_{1})-u_\lambda(\omega_{-1})]$. 
\\
Let $p \in D^- u_{\lambda}(0)$. Then $H_{-1}(p) \geq H_{-1}(u_\lambda(\omega_{1})-u_\lambda(\omega_{-1}))=-\lambda u_{\lambda}(\omega_{-1})$, thus $\lambda u_{\lambda}(0)+H(0,p) \geq 0$. Consequently, $u_\lambda$ is a viscosity solution at $x=0$. 
%Let $\phi$ be a $C^1$ function such that 
%$\phi(0)=u_\lambda(0)$ and $\phi \geq u$ on a neighbourhood of 0. Thus, $\left|D\phi(0)\right| \geq u_\lambda(\omega_1)-u_\lambda(\omega_{-1})$. It follows that $H_{-1}(\left|D\phi(0)\right|) \geq H_{-1}(u_\lambda(\omega_1)-u_\lambda(\omega_{-1}))=\lambda u_\lambda(\omega_{-1})=\lambda \phi(0)$. Consequently, $\lambda \phi(0)+H(0,D\phi(0)) \leq 0$. Thus, $u_\lambda$ is a viscosity sub-solution at 0. Likewise,  On $u_\lambda$ is a viscosity super-solution at 0. 
\\
\\
Consider now the case $x=1$. We have $D^+ u_{\lambda}(1)=[u_\lambda(\omega_{-1})-u_\lambda(\omega_{1}),u_\lambda(\omega_{1})-u_\lambda(\omega_{-1})]$ and $D^- u_{\lambda}(1)=\emptyset$. 
\\
Let $p \in D^+ u_{\lambda}(1)$. Then $H_{1}(p) \leq H_{1}(u_\lambda(\omega_{1})-u_\lambda(\omega_{-1}))=-\lambda u_{\lambda}(\omega_1)$, thus $\lambda u_{\lambda}(1)+H(1,p) \geq 0$. Consequently, $u_\lambda$ is a viscosity solution at $x=1$. 
\\
Let us now conclude the proof of Theorem \ref{main}. Because $H$ is 2-periodic, equation $(\ref{HJBex})$ can be considered as written on $\m{T}^1$. 

As noticed before, equation (\ref{HJBex}) is identical to equation (\ref{disceq}). Therefore, as stated in the introduction, $-\lambda u_{\lambda}$ converges to $c(H)$.
Proposition \ref{propsto} \ref{key} implies that $(-\lambda_n u_{\lambda_n}(1))$ converges to 0, thus $c(H)=0$. Still by Proposition \ref{propsto} \ref{key}, $(u_\lambda(1))$ does not have a limit when $\lambda$ tends to 0: Theorem \ref{main} is proved.

\section{Proof of Proposition \ref{propsto}} \label{proofprop}
\subsection{Proof of \ref{prop2}}
%Consider the stochastic game $\Gamma'$, with action set $I$ for Player 1, $J$ for Player 2, and payoff function 
%$-g$. In this game, Player 1 faces the same challenge as Player 2 in $\Gamma'$
Consider Proposition \ref{shapley} \ref{shapleyv} for $r=1$. Take $j=1/2 \in J$. It yields
\begin{eqnarray}
\nonumber
w_{\lambda}(\omega_1) &\leq& \max_{i \in I} \left\{ 1+(1-\lambda) \left(\frac{1}{2}w_{\lambda}(\omega_1)+\frac{1}{2} w_{\lambda}(\omega_{-1}) \right) \right\}
\\
&=& 1+\frac{1}{2}(1-\lambda)\left(w_{\lambda}(\omega_1)+w_{\lambda}(\omega_{-1}) \right)
\label{prog1}.
\end{eqnarray}
Take $i=1/2$. This yields
\begin{equation} \label{prog2}
w_{\lambda}(\omega_1) \geq \frac{1}{2}+\frac{1}{2}(1-\lambda)\left(w_{\lambda}(\omega_1)+w_{\lambda}(\omega_{-1}) \right).
\end{equation}
For $r=-1$, taking $j=1/2$ and then $i=1/2$ produce the following inequalities:
\begin{equation} \label{prog3}
w_{\lambda}(\omega_{-1}) \leq -\frac{1}{2}+\frac{1}{2}(1-\lambda)\left(w_{\lambda}(\omega_1)+w_{\lambda}(\omega_{-1}) \right),
\end{equation}
and
\begin{equation} \label{prog4}
w_{\lambda}(\omega_{-1}) \geq -1+\frac{1}{2}(1-\lambda)\left(w_{\lambda}(\omega_1)+w_{\lambda}(\omega_{-1}) \right).
\end{equation}
Combining (\ref{prog2}) and (\ref{prog3}) yield $w_{\lambda}(\omega_1) \geq w_{\lambda}(\omega_{-1})+1 \geq w_{\lambda}(\omega_{-1})$. 
Combining (\ref{prog1}) and (\ref{prog4}) yield $w_{\lambda}(\omega_{-1}) \geq w_{\lambda}(\omega_{1})-2$, and \ref{prop2} is proved. 

\subsection{Proof of \ref{key}}
%The result \ref{key} is useful in the proof of \ref{prop3}. Consequently, the proof of  \ref{key} is presented before the one of \ref{prop3}. 
%\\
For $(i,i') \in \left\{0,1\right\}^2$, consider the strategy $y$ of Player 1 that plays $i$ in $\omega_1$ and $i'$ in $\omega_{-1}$ (regardless of Player 2's actions), and the strategy $z$ of Player 2 that plays $a$ in state $\omega_1$, and $b$ in state $\omega_{-1}$. Denote $\gamma_{\lambda}^{i,i'}(a,b):=\gamma_{\lambda}^{\omega_1}(y,z)$ (resp., $\widetilde{\gamma}_{\lambda}^{i,i'}(a,b):=\gamma_{\lambda}^{\omega_{-1}}(y,z)$),  the payoff in $\Gamma_{\lambda}^{\omega_1}$ (resp., $\Gamma_{\lambda}^{\omega_{-1}}$), when $(y,z)$ is played. 

\begin{proposition}
The following hold: 
\begin{enumerate}
\item
\begin{equation*}
\gamma^{0,0}_{\lambda}(a,b)=\frac{-2(a-b-\lambda+b \lambda)}{\lambda(a+b+\lambda-a \lambda -b \lambda)}
\end{equation*}
\begin{equation*}
\gamma^{1,1}_{\lambda}(a,b)=-\frac{a-b+\lambda b}{\lambda(a+b+\lambda-a \lambda -b \lambda-2)}
\end{equation*}
\begin{equation*}
\gamma^{1,0}_{\lambda}(a,b)=\frac{2a+2b+2\lambda-a b -a \lambda-2b\lambda+a b \lambda-2}{\lambda(b-a+\lambda a -b \lambda+1)}
\end{equation*}
\begin{equation*}
\gamma^{0,1}_{\lambda}(a,b)=-\frac{2a+2b-a b -2 b \lambda+ a b \lambda-2}{\lambda(a-b-a \lambda +b \lambda +1)}
\end{equation*}
\item
\begin{itemize}
\item
$\gamma^{0,0}_{\lambda}$ is decreasing with respect to $a$ and increasing with respect to $b$. 
\item
$\gamma_{\lambda}^{1,1}$ is increasing with respect to $a$ and decreasing with respect to $b$.
\item
$\gamma_{\lambda}^{1,0}$ is increasing with respect to $a$ and $b$.
\item
$\gamma_{\lambda}^{0,1}$ is decreasing with respect to $a$ and $b$. 
\end{itemize}
\end{enumerate}
\end{proposition}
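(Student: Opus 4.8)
The plan is to treat the game with both players restricted to constant (state-but-not-history-dependent) strategies as a Markov chain, compute the induced discounted payoff explicitly, and then read off monotonicity from the closed forms. First I would fix $(i,i') \in \{0,1\}^2$ and a pair $(a,b) \in [0,1]^2$ for Player 2, and observe that under the profile $(y,z)$ the state process $(k_m)$ is a two-state Markov chain on $\{\omega_1,\omega_{-1}\}$ whose transition probabilities are obtained by plugging the constants into $q$: from $\omega_1$ the chain stays with probability $ia + (1-i)(1-a)$ and jumps with the complementary probability, and from $\omega_{-1}$ it stays with probability $i'b + (1-i')(1-b)$ (using the symmetry in the definition of $q(\cdot\,|\,\omega_{-1},\cdot,\cdot)$). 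Writing $\alpha$ for the jump probability out of $\omega_1$ and $\beta$ for the jump probability out of $\omega_{-1}$, and $c_1 = g(\omega_1,i,a)$, $c_{-1} = g(\omega_{-1},i',b)$ for the per-stage payoffs, the pair $(\gamma,\widetilde\gamma) = (\gamma^{i,i'}_\lambda(a,b), \widetilde\gamma^{i,i'}_\lambda(a,b))$ satisfies the linear system
\begin{align*}
\gamma &= c_1 + (1-\lambda)\big[(1-\alpha)\gamma + \alpha\,\widetilde\gamma\big],\\
\widetilde\gamma &= c_{-1} + (1-\lambda)\big[\beta\,\gamma + (1-\beta)\widetilde\gamma\big].
\end{align*}
This is exactly the one-step Shapley-type relation of Proposition \ref{shapley}\ref{shapleyv} with the min--max stripped away because the strategies are fixed; it can also be seen directly from $\gamma^{k_1}_\lambda = \sum_{m\ge1}(1-\lambda)^{m-1}\mathbb E_{y,z}^{k_1}(g_m)$ by conditioning on the first stage.

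Next I would solve this $2\times2$ linear system by Cramer's rule. The determinant is $\big(1-(1-\lambda)(1-\alpha)\big)\big(1-(1-\lambda)(1-\beta)\big) - (1-\lambda)^2\alpha\beta$, which simplifies (expanding and using $1-(1-\lambda)=\lambda$) to $\lambda\big(\alpha+\beta+\lambda-\lambda\alpha-\lambda\beta\big)$ up to the algebra, and in any case is strictly positive for $\lambda \in (0,1]$, so the system is nonsingular and $\gamma$ is a genuine rational function of $(a,b)$. Substituting the four choices of $(i,i')$ — for which $(\alpha,\beta,c_1,c_{-1})$ specialize to $(1-a,\,b,\,2(1-a),\,-2(1-b))$ for $(0,0)$; $(a,\,1-b,\,a,\,-b)$ for $(1,1)$; $(a,\,b,\,a,\,-2(1-b))$ for $(1,0)$; and $(1-a,\,1-b,\,2(1-a),\,-b)$ for $(0,1)$ — and simplifying yields the four displayed formulas. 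This is routine algebra; I would just double-check each numerator and denominator against the claimed expressions, and verify the denominators do not vanish on $[0,1]^2$ for $\lambda\in(0,1]$ (e.g. for $\gamma^{0,0}$ the denominator is $\lambda$ times $a+b+\lambda-\lambda a-\lambda b = a(1-\lambda)+b(1-\lambda)+\lambda > 0$ unless $a=b=0$ and even then it equals $\lambda>0$; similarly $b-a+\lambda a - \lambda b + 1 = 1-(a-b)(1-\lambda) \in [\lambda,2-\lambda]$ for $\gamma^{1,0}$, etc.).

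For part 2, the monotonicity statements, I would differentiate each rational function with respect to $a$ and to $b$ and show the derivative has constant sign on $[0,1]^2$. Since each $\gamma$ has the form $N(a,b)/\big(\lambda\, D(a,b)\big)$ with $N,D$ affine in each variable separately, $\partial_a \gamma$ has the sign of $D\,\partial_a N - N\,\partial_a D$, which is again affine in $b$ (the $a$-dependence cancels at leading order because $N,D$ are affine in $a$), so it suffices to check its sign at $b=0$ and $b=1$, and symmetrically for $\partial_b$; this reduces everything to a finite check of signs of one-variable affine functions on $[0,1]$, i.e.\ checking values at the endpoints $\{0,1\}$ of the other variable. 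The main obstacle is purely bookkeeping: keeping the signs straight through the four cases and making sure the sign pattern matches the asserted one (decreasing in $a$ / increasing in $b$ for $\gamma^{0,0}$, etc.), which is intuitively clear — in state $\omega_1$ the "$(0,0)$-corner" pays $2$ to Player 1 and Player 2 playing $a$ larger pushes the chain toward $\omega_{-1}$ where that corner pays $-2$, hence $\gamma^{0,0}$ decreases in $a$ — but must be confirmed by the computation. No single step is deep; the whole proposition is a computation, and I would present it as such, displaying the linear system, its solution, and a compact sign analysis of the four derivatives.
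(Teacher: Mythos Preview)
Your approach is essentially identical to the paper's: set up the two-equation recursive (Shapley-type) system for $(\gamma,\widetilde\gamma)$ by conditioning on the first stage, solve the linear system to obtain the four closed forms, and then differentiate to read off the monotonicity. One slip to correct before you carry out the algebra: in your list of specializations you have systematically written the stay probability in place of $\alpha$ --- for instance, for $(i,i')=(0,0)$ the jump probability out of $\omega_1$ is $\alpha = i(1-a)+(1-i)a\big|_{i=0} = a$, not $1-a$, and likewise $\alpha$ should be $1-a$, $1-a$, $a$ in the cases $(1,1)$, $(1,0)$, $(0,1)$ respectively (your $\beta$, $c_1$, $c_{-1}$ are all correct).
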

\begin{proof}
\begin{enumerate}
\item
The payoffs $\gamma^{0,0}_{\lambda}(a,b)$ and $\widetilde{\gamma}_{\lambda}^{0,0}(a,b)$ satisfy the following recursive equation:
\begin{eqnarray*}
\gamma^{0,0}_{\lambda}(a,b)&=&a(1-\lambda)\widetilde{\gamma}_{\lambda}^{0,0}(a,b)+(1-a) (2+(1-\lambda) \gamma^{0,0}_{\lambda}(a,b))
\\
\widetilde{\gamma}^{0,0}_{\lambda}(a,b)&=&a(1-\lambda) \gamma_{\lambda}^{0,0}(a,b)+(1-a) (-2+(1-\lambda) \widetilde{\gamma}^{0,0}_{\lambda}(a,b))
\end{eqnarray*}
Combining these two relations give the first equality. The three other equalities can be derived in a similar fashion. 
\item
These monotonicity properties are simply obtained by deriving $\gamma^{i,i'}_\lambda$ with respect to $a$ and $b$. 
%\item
%Development of $\gamma_{\lambda}^{0,0}(c_0+c \lambda, c_0+d \lambda)...$
\end{enumerate}
\end{proof}
For $\lambda \in (0,1]$, set $\displaystyle p^*(\lambda):=2-\sqrt{2}+\left(\frac{3}{4}-\frac{1}{\sqrt{2}}\right) \lambda$.
Define a strategy $y$ of Player 1 in the following way: 
\begin{itemize}
\item
in state $\omega_1$, play $0$ if $j \leq p^*(\lambda)$, play 1 otherwise,
\item
in state $\omega_{-1}$, play $1$ if $j \leq p^*(\lambda)$, play 0 otherwise. 
\end{itemize}
The rationale behind this strategy can be found in Section \ref{asympt}.  
\\
For all $n \geq 1$, define
\begin{equation*}
\lambda_n:=  \frac{2^{-2n}}{\displaystyle \frac{3}{4}-\sqrt{2}} \quad \text{and} \quad \mu_n:=\frac{2^{-2n-1}}{\displaystyle \frac{3}{4}-\sqrt{2}}. 
\end{equation*}
\begin{proposition}
The following hold:
\begin{enumerate}
\item
\begin{equation*}
\lim_{n \rightarrow +\infty} \min_{z \in Z} \gamma_{\lambda_n}(y,z)=\frac{1}{\sqrt{2}}
\end{equation*}
\item
\begin{equation*}
\lim_{n \rightarrow +\infty} \min_{z \in Z} \gamma_{\mu_n}(y,z)=\frac{5}{2 \sqrt{2}}-1>\frac{1}{\sqrt{2}}
\end{equation*}
\end{enumerate}
\end{proposition}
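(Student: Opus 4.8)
The plan is to reduce the minimum over Player 2's stationary strategies to a minimum of four explicit rational functions evaluated at finitely many corners of $J^2$. Against the fixed strategy $y$, a stationary strategy $z$ of Player 2 is just a pair $(a,b):=(z(\omega_1),z(\omega_{-1}))\in J^2$, and by the definition of $y$ the resulting payoff $\gamma_\lambda(y,z)=\gamma_\lambda^{\omega_1}(y,z)$ equals one of the four functions of the previous proposition: Player 1 answers $0$ in $\omega_1$ when $a\le p^*(\lambda)$ and $1$ otherwise, and answers $1$ in $\omega_{-1}$ when $b\le p^*(\lambda)$ and $0$ otherwise. Hence $J^2$ splits into four regions and
\[
\min_{z\in Z}\gamma_\lambda(y,z)=\min\{M_A,M_B,M_C,M_D\},
\]
where $M_A$ is the infimum of $\gamma^{0,1}_\lambda$ over $\{a,b\in J:\ a\le p^*(\lambda),\ b\le p^*(\lambda)\}$, $M_B$ the infimum of $\gamma^{0,0}_\lambda$ over $\{a\le p^*(\lambda)<b\}\cap J^2$, $M_C$ the infimum of $\gamma^{1,1}_\lambda$ over $\{b\le p^*(\lambda)<a\}\cap J^2$, and $M_D$ the infimum of $\gamma^{1,0}_\lambda$ over $\{a,b>p^*(\lambda)\}\cap J^2$.

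Second, I would invoke the monotonicity statements of the previous proposition to collapse each infimum to an evaluation at a single corner. Since $\gamma^{0,1}_\lambda$ is decreasing in both variables, $M_A=\gamma^{0,1}_\lambda(\bar c,\bar c)$ with $\bar c:=\max\big(J\cap(-\infty,p^*(\lambda)]\big)$; since $\gamma^{1,0}_\lambda$ is increasing in both variables, $M_D=\gamma^{1,0}_\lambda(\underline c,\underline c)$ with $\underline c:=\min\big(J\cap(p^*(\lambda),+\infty)\big)$; and likewise $M_B=\gamma^{0,0}_\lambda(\bar c,\underline c)$, $M_C=\gamma^{1,1}_\lambda(\underline c,\bar c)$. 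These extrema are genuinely attained, not merely approached, because for $n$ large $\bar c$ and $\underline c$ are isolated points of $J$ bounded away from its unique accumulation point $2-\sqrt 2$. Here the arithmetic of $J$ enters: for $\lambda=\lambda_n$ one has $p^*(\lambda_n)=2-\sqrt 2+2^{-2n}\in J$, so $\bar c=2-\sqrt 2+2^{-2n}$ and $\underline c=2-\sqrt 2+2^{-2(n-1)}$; for $\lambda=\mu_n$ one has $p^*(\mu_n)=2-\sqrt 2+2^{-2n-1}\notin J$, and the neighbouring points of $J$ are $\bar c=2-\sqrt 2+2^{-2n-2}$ and $\underline c=2-\sqrt 2+2^{-2n}$.

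Third, I would compute the relevant limits by substituting these corners into the rational formulas for $\gamma^{i,i'}_\lambda$. The mechanism that keeps the limits finite, and the real content of the computation, is that $2-\sqrt 2$ is a root of $c^2-4c+2$: writing $\bar c$ or $\underline c$ as $2-\sqrt 2+\eta_n$ with $\eta_n=\Theta(2^{-2n})$ and $\eta_n/\lambda$ converging to an explicit rational multiple of $\frac34-\frac1{\sqrt2}$, the numerator of the $1/\lambda$ term is $\Theta(2^{-2n})$, so each $M$ has a finite limit. For example $\gamma^{0,1}_\lambda(c,c)=\frac{c^2-4c+2}{\lambda}-(c^2-2c)$, which at $c=\bar c$ tends to $-2\sqrt 2\,\lim(\eta_n/\lambda)-(2-2\sqrt 2)$; with $\eta_n=2^{-2n}$ and $\lambda=\lambda_n$ this is $\frac1{\sqrt2}$, while with $\eta_n=2^{-2n-2}$ and $\lambda=\mu_n$ it is $\frac5{2\sqrt2}-1$ (using $2\sqrt 2-\frac3{2\sqrt 2}=\frac5{2\sqrt 2}$). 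Performing the other three evaluations in each regime, one checks that $M_B,M_C,M_D$ tend to explicit constants strictly larger than the limit of $M_A$, so the minimum is realized by $M_A$: $\min_z\gamma_{\lambda_n}(y,z)\to\frac1{\sqrt 2}$ and $\min_z\gamma_{\mu_n}(y,z)\to\frac5{2\sqrt 2}-1$. The strict inequality $\frac5{2\sqrt2}-1>\frac1{\sqrt2}$ reduces to $\frac3{2\sqrt2}>1$, i.e.\ $9>8$.

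The main obstacle is not conceptual but is the bookkeeping in the third step: identifying region by region and sequence by sequence which corner of $J^2$ minimizes each $\gamma^{i,i'}_\lambda$, handling the boundary convention in the definition of $y$ correctly at the point $p^*(\lambda_n)$, which itself lies in $J$, and verifying in all of the rational-function evaluations that the $1/\lambda$ singularity cancels to first order, so that no region accidentally produces a value below the announced limit. No single step is deep; the only real risk is an algebraic slip in one of these evaluations.
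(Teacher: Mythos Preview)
Your proposal is correct and follows essentially the same route as the paper: split $J^2$ into the four regions determined by the thresholds in $y$, use the monotonicity of $\gamma^{i,i'}_\lambda$ from the preceding proposition to push each regional infimum to its corner, and evaluate the rational formulas there. Your corners $\bar c,\underline c$ for the $\mu_n$ regime coincide exactly with the paper's choice $p^*(\mu_n/2)$ and $p^*(2\mu_n)$, and your $M_A$ computation reproduces the announced limit. The only difference is organisational: for part~1 the paper exploits that $p^*(\lambda_n)\in J$, so $\bar c=p^*(\lambda_n)$ and by monotonicity each regional minimum is bounded below by the value of the corresponding $\gamma^{i,i'}_{\lambda_n}$ at $(p^*(\lambda_n),p^*(\lambda_n))$; it then simply notes that all four of these tend to $1/\sqrt2$. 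Your unified treatment, computing all four corner limits explicitly in both regimes, is a bit longer but equally valid.
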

\begin{proof}
\begin{enumerate}
\item
For all $(i,i') \in \left\{0,1\right\}$, 
\begin{equation*}
\lim_{n \rightarrow +\infty} \gamma^{i,i'}_{\lambda_n}(p^*(\lambda_n),p^*(\lambda_n)) = \displaystyle \frac{1}{\sqrt{2}},
\end{equation*}
and the result follows. 
\item
Let $z$ be a strategy of Player 2, and $a=z(\omega_1)$ and $b=z(\omega_{-1}$). 
\\
Note that the interval $(p^*(\mu_n/2),p^*(2\mu_n))$ does not intersect $J$.

The following cases are distinguished:
\begin{case}{$a \leq p^*(\mu_n)$ and $b \leq p^*(\mu_n)$, thus $a \leq p^*(\mu_n/2)$ and $b \leq p^*(\mu_n/2)$}
\end{case}
We have $\gamma^{\omega_1}_{\mu_n}(y,z)=\gamma^{0,1}_{\mu_n}(a,b) \geq \gamma^{0,1}_{\mu_n}(p^*(\mu_n/2),p^*(\mu_n/2)) \underset{n \rightarrow +\infty}{\rightarrow} \displaystyle \frac{5}{4} \sqrt{2}-1$
\begin{case}{$a \leq p^*(\mu_n)$ and $b \geq p^*(\mu_n)$}, thus $a \leq p^*(\mu_n/2)$ and $b \geq p^*(2\mu_n)$
\end{case}
We have $\gamma^{\omega_1}_{\mu_n}(y,z)=\gamma^{0,0}_{\mu_n}(a,b) \geq \gamma^{0,0}_{\mu_n}(p^*(\mu_n/2), p^*(2\mu_n))  \underset{n \rightarrow +\infty}{\rightarrow} \displaystyle - \frac{1+2 \sqrt{2}}{8(-2+\sqrt{2})}$
\begin{case}{$a \geq p^*(\mu_n)$ and $b \leq p^*(\mu_n)$}, thus $a \geq p^*(2\mu_n)$ and $b \leq p^*(\mu_n/2)$
\end{case}
We have $\gamma^{\omega_1}_{\mu_n}(y,z)=\gamma^{1,1}_{\mu_n}(a,b) \geq \gamma^{1,1}_{\mu_n}(p^*(2\mu_n), p^*(\mu_n/2))  \underset{n \rightarrow +\infty}{\rightarrow} (-1/16) \displaystyle \frac{-25+14 \sqrt{2}}{\sqrt{2}-1}$
\begin{case}{$a \geq p^*(\mu_n)$ and $b \geq p^*(\mu_n)$}, thus $a \geq p^*(2\mu_n)$ and $b \geq p^*(2\mu_n)$
\end{case}
We have $\gamma^{\omega_1}_{\mu_n}(y,z)=\gamma^{1,0}_{\mu_n}(a,b) \geq \gamma^{1,0}_{\mu_n}(p^*(2\mu_n), p^*(2\mu_n))  \underset{n \rightarrow +\infty}{\rightarrow} \displaystyle -2+2\sqrt{2}$
\\
Among these cases, the smallest limit is $\frac{5}{4}(\sqrt{2}-1)$, and the result follows.
\end{enumerate}
\end{proof}
%\subsection{Proof of \ref{prop3}}
%It is well-known that the solution 
\section*{Acknowledgments}
The author is very grateful to Pierre Cardaliaguet, Andrea Davini, Abraham Neyman, Sylvain Sorin and Maxime Zavidovique for helpful discussions.

\bibliography{bibliogen}
\end{document}